\newcommand{\reals}{\mathbb{R}}
\newcommand{\R}{\reals}
\newcommand{\Rnonneg}{\reals_{\geq 0}}
\newcommand{\toSet}{\rightrightarrows}
\newcommand{\Bcal}{\mathcal{B}}
\newcommand{\Ical}{\mathcal{I}}
\newcommand{\Jcal}{\mathcal{J}}
\newcommand{\Ucal}{\mathcal{U}}
\newcommand{\Xcal}{\mathcal{X}}
\newcommand{\eqn}[1]{\begin{align} #1 \end{align}}
\newcommand{\eqnN}[1]{\begin{align*} #1 \end{align*}}
\newcommand{\seqn}[2][]{
\begin{subequations}
    #1
\begin{align} #2 \end{align}
\end{subequations}
}
\newcommand{\bmat}[1]{\begin{bmatrix}#1\end{bmatrix}}
\newcommand{\norm}[1]{\left\Vert #1 \right \Vert}
\newcommand{\abs}[1]{\left | #1 \right |}
\newcommand{\argmin}[1]{\operatorname*{argmin}_{\substack{#1}}}
\newcommand{\st}{\operatorname*{s.t.}}
\theoremstyle{plain}
\newtheorem{theorem}{Theorem}
\newtheorem{conjecture}{Conjecture}
\newtheorem{corollary}{Corollary}
\newtheorem{lemma}{Lemma}
\newtheorem{problem}{Problem}
\newtheorem{definition}{Definition}
\newtheorem{assumption}{Assumption}
\theoremstyle{definition}
\newtheorem{remark}{Remark}
\theoremstyle{remark}
\newtheorem{example}{Example}
\let\NAT@parse\undefined
\acrodef{QP}[QP]{Quadratic Program}
\acrodef{QCQP}[QCQP]{Quadratically Constrained Quadratic Program}
\acrodef{SOCP}[SOCP]{Second-Order Cone Problem}
\acrodef{MFCQ}[MFCQ]{Mangasarian-Fromovitz Constraint Qualification}
\acrodef{SSOC}[SSOC]{Strong Second Order Condition}
\acrodef{LICQ}[LICQ]{Linear Independence Constraint Qualification}
\acrodef{KKT}[KKT]{Karush–Kuhn–Tucker}
\acrodef{SC}[SC]{Slater's Condition}
\acrodef{CLF}[CLF]{Control Lyapunov Function}
\acrodef{CBF}[CBF]{Control Barrier Function}
\acrodef{ODE}[ODE]{Ordinary Differential Equation}
\title{\LARGE \bf
Reformulations of Quadratic Programs for Lipschitz Continuity
}
\author{Devansh R. Agrawal$^{\dagger}$, Haejoon Lee$^{\dagger}$, and Dimitra Panagou
\thanks{This work is supported by the Air Force Office of Scientific Research (AFOSR) under FA9550-23-1-0163 and the National Science Foundation (NSF) under Award Number 1942907.}
\thanks{$^\dagger$ Both D. R. Agrawal and H. Lee have equal contribution.}
\thanks{All authors are with the Robotics Department, University of Michigan, Ann Arbor, MI, USA
        {\tt \{devansh, haejoonl, dpanagou\}@umich.edu}}
}
\begin{document}

\maketitle
\thispagestyle{empty}
\pagestyle{empty}

\begin{abstract}
Optimization-based controllers often lack regularity guarantees, such as Lipschitz continuity, when multiple constraints are present. When used to control a dynamical system, these conditions are essential to ensure the existence and uniqueness of the system's trajectory. Here we propose a general method to convert a \acf{QP} into a \acf{SOCP}, which is shown to be Lipschitz continuous. Key features of our approach are that (i)~the regularity of the resulting formulation does not depend on the structural properties of the constraints, such as the linear independence of their gradients; and (ii)~it admits a closed-form solution, which is not available for general \acp{QP} with multiple constraints, enabling faster computation. We support our method with rigorous analysis and examples. \href{https://github.com/joonlee16/Lipschitz-controllers}{[Code]}\footnote{Code: https://github.com/joonlee16/Lipschitz-controllers}
\end{abstract}


\section{Introduction}
\label{sec:intro}

This paper investigates the Lipschitz continuity of parametric \acfp{QP} of the form
\seqn[\label{eqn:intro_qp}]{
 \min_{u \in \R^m} \ &  \norm{ u - \pi_{\rm des}(x)}^2\\
\st \ & u \in K(x),
}
where $x \in \Xcal \subset \R^n$ is a parameter, and
\eqn{
K(x) = \left \{ u \in \R^m: A(x) u \leq b(x) \right \} \label{eqn:K}
}
is a convex polyhedral set with $A: \Xcal \to \R^{p \times m}$, $b: \Xcal \to \R^p$, and $\pi_{\rm des}: \Xcal \to \R^m$. We assume throughout that $K(x)$ is nonempty at each $x \in \Xcal$.  The set of minimizers of~\eqref{eqn:intro_qp} is denoted by
\eqn{
\Pi(x) = \argmin{u \in K(x)} \norm{ u - \pi_{\rm des}(x)}^2, \label{eqn:main_qp}
}
where $\Pi: \Xcal \toSet \R^m$ is a set-valued map. Since $K$ is always nonempty and the objective function is strongly convex, $\Pi(x)$ contains exactly one element at each $x \in \Xcal$. This unique minimizer is denoted by $\pi: \Xcal \to \R^m$. 

Lipschitz continuity of $\pi$, the (unique) minimizer of parametric problems such as~\eqref{eqn:main_qp}, is of fundamental importance. For example, consider a nonlinear dynamical system 
\eqn{
\dot x = f(x, u) = f(x, \pi(x)) = f_{\rm cl}(x), \label{eqn:dynamics}
}
where the control input determined by~\eqref{eqn:main_qp}, that is, $u = \pi(x)$. When $\pi$ is Lipschitz continuous, its sensitivity to changes in $x$ is bounded. That is, small perturbations in the state (induced by noise, numerical inaccuracies, etc.) result in bounded deviations in the control input, thereby preventing instability or chattering behavior of the system. Moreover, the Lipschitz continuity of $\pi$ induces Lipschitz continuity of the closed-loop dynamics $f_{\rm{cl}}$. In turn, this ensures that the system~\eqref{eqn:dynamics} admits a unique solution for all time~\cite{khalil2002nonlinear}, guaranteeing that the system trajectories are unique.

\begin{figure}[t]
    \centering
    \includegraphics[width=0.9\linewidth]{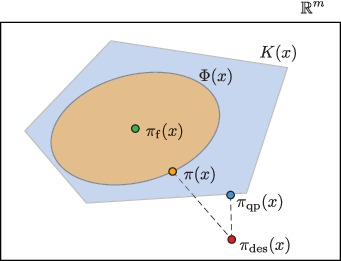}
    \caption{A depiction of the main problem and our proposed solution approach. In parametric \acp{QP}, the solution $\pi_{\rm qp}(x)$ may be non-Lipschitz or even discontinuous with respect to $x$. This paper defines a subset $\Phi(x)$ of the constraint set $K(x)$ that is always feasible and Lipschitz continuous on $\R^m$ (defined in~\Cref{assum:main}) such that the modified solution $\pi(x)$ is Lipschitz continuous. This is done by identifying a Lipschitz continuous function $\pi_{\rm f}(x)$ that is always feasible, and constructing a set $\Phi(x)$ around it such that $\pi_{\rm socp}(x)$ is Lipschitz wrt $x$.}
    \label{fig:problem}
\end{figure}


However, it is well known that in the absence of certain constraint qualifications (e.g.,~\ac{LICQ}), $\pi$ can be non-Lipschitz or even discontinuous~\cite{robinson1982, liu_sensitivity_1995, alyaseen2025continuity}. This is true even when $A, b, \pi_{\rm des}$ are continuously differentiable with respect to $x$ (see Example~\ref{ex:dev_ex}), or \acf{SC} is satisfied (see \Cref{ex:motivation_ex}). 
\begin{example}
\label{ex:dev_ex}
 Let $x \in \Xcal = \R$. Consider the \ac{QP} 
\seqn[\label{eq:dev_ex_eq}]{
\label{eq:ex1}
\pi_{\rm qp}(x) = \argmin{u \in \R^2} \ & \norm{ u - \bmat{-2 \\ 0 }}^2\\
\st \ & \bmat{1 & 0 \\ -1 & -x} u \leq \bmat{1 \\ -(1+x)}.
}
The solution of~\eqref{eq:dev_ex_eq} can be derived analytically, and is plotted in~\Cref{fig:example_1}~(c). Even though $A, b$ are continuously differentiable, notice that there exists a discontinuity of solution at $x=0$.

\end{example}

\begin{figure*}[t]
    \centering
    \includegraphics[width=\linewidth]{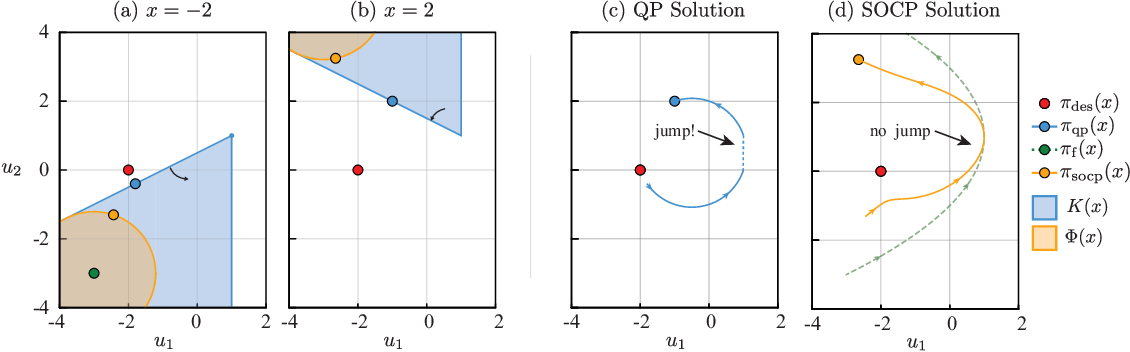}
    \caption{Fig.~(a) and~(b) depict the feasible spaces and solutions of the original \ac{QP}~\eqref{eq:dev_ex_eq} and of our proposed \ac{SOCP} reformulation in the form of~\eqref{eqn:socp_r} at $x=-2$ and $x=2$, respectively. Note at $x = 0$, $K(x)$ shrinks to $\{ u \in \R^2 : u_1 = 1 \}$, a set with no interior. This causes $\pi_{\rm qp}$ to jump, as depicted in Fig.~(c). In contrast, the \ac{SOCP} reformulation introduces a smoothly varying feasible space $\Phi(x)$ centered at $\pi_{\rm f}(x)$, ensuring that the solution $\pi_{\rm socp}(x)$ transitions without jumps, as depicted in Fig.~(d). Animations are available in our \href{https://github.com/joonlee16/Lipschitz-controllers}{supplementary repository}.}
    \label{fig:example_1}
\end{figure*}



Since discontinuous solutions may not be preferable in practice, e.g., due to chattering phenomena, the focus of this paper is to reformulate~\eqref{eqn:intro_qp} into a problem of the form
\seqn[\label{eqn:intro_socp}]{
\min_{u \in \R^m} \ & \norm{ u - \pi_{\rm des}(x)}^2\\
\st \ &  u \in \Phi(x) \subset K(x), 
} 
whose minimizer is both unique and Lipschitz continuous with respect to $x$. To this end, we propose a suitable definition for $\Phi: \Xcal \toSet \R^m$ and demonstrate that, by construction, its properties ensure the Lipschitz continuity of the minimizer of~\eqref{eqn:intro_socp}.
\Cref{fig:problem} depicts this pictorially.

\emph{Literature Review:} Parametric optimization --- both convex and nonconvex --- has been extensively studied~\cite{fiacco1983introduction, bonnans2013perturbation, rockafellar_variational_2009, still2018lectures}, with a central focus on establishing conditions under which the solution map exhibits regularity~\cite{morris15, hager_lipschitz_1979, mestres_regularity_2025, liu_sensitivity_1995, robinson1982}. Under constraint qualifications such as \ac{LICQ}, \ac{MFCQ}, and \ac{SC}, solutions to \acp{QP} can be differentiable, Lipschitz, or H{\"o}lder continuous, depending on the setting. See~\cite{still2018lectures, mestres_regularity_2025} for detailed summary.

To apply these results, one must verify constraint qualifications hold. Although feasible in some cases~\cite{xu2015robustness, agrawal2022safe, wu_quadratic_2023}, this is generally difficult. One alterative approach is to combine multiple constraints into a unified constraint~\cite{black2023adaptation, egerstedt2018, breeden_compositions_2023}, where \ac{LICQ} holds trivially. However, this may render the problem infeasible even if $K(x)$ is non-empty.

Another line of work analyzes regularity of the feasible set map $K:\Xcal \toSet \R^m$, treating it as a set-valued mapping~\cite{freeman2008robust, rockafellar_variational_2009, bednarczuk2021lipschitz}. This literature studies whether $K$ is Lipschitz continuous and when Lipschitz selections of $K$ exist.

\emph{Contributions:} In this paper, we take a fundamentally different approach. Instead of focusing on the structural conditions of the~\ac{QP} like constraint qualifications, we
\begin{itemize}
    \item propose a method to reformulate the \ac{QP}~\eqref{eqn:main_qp} into a \acf{SOCP}~\eqref{eqn:intro_socp} that is guaranteed to admit a unique Lipschitz continuous solution under mild assumptions,
    \item show that our \ac{SOCP} admits a closed-form solution, 
    \item illustrate the challenges and ideas through examples,
    \item provide a conjecture to expand the set of feasible solutions and reduce conservatism.
\end{itemize}


The core theoretical insight is that by carefully tightening the constraint set, we ensure Lipschitz continuity while preserving uniqueness. A key source of non-Lipschitz behavior in~\eqref{eqn:main_qp} is when two active constraints become linearly dependent, as shown in~\Cref{ex:dev_ex}. Small perturbations in this case can drastically alter the feasible set and solution. Our formulation defines $\Phi$ to avoid such degeneracies.

\section{PRELIMINARIES}
\label{sec:prelim}

Let $\R$ denote the set of reals. Let $C^k$ denote the set of functions with $k$ continuous derivatives.




\begin{definition}[{\cite[Def 9.1]{still2018lectures}}]
    Let $X, Y$ be normed spaces. A function $f: X \to Y$ is \textbf{Lipschitz continuous on $X$} if there exists a $L \geq 0$ such that
            \eqn{
            \norm{f(x_1) - f(x_2)}\leq L \norm{x_1 - x_2}, \ \forall x_1, x_2 \in X.
            }
    $f$ is \textbf{locally Lipschitz at $x \in X$} if there exists a neighborhood $\Bcal \subset X$ of $x$ and a $L = L(x) \geq 0$ such that
    \eqn{
    \norm{f(x_1) - f(x_2)}\leq  L \norm{x_1 - x_2}, \ \forall x_1, x_2 \in \Bcal.
    }
    $f$ is locally Lipschitz on $\Xcal$ if it is locally Lipschitz $\forall x \in X$.
\end{definition}
 A Lipschitz function is also locally Lipschitz, while the converse is not necessarily true~\cite[Th. 2.1.6]{lipschitz_functions_textbook}. Lipschitz continuity can also be extended to set-valued maps.
\begin{definition}[{\cite[Def. 9.26]{rockafellar_variational_2009}}]
\label{def:set_lipschitz}
    A set-valued map $K: \Xcal \toSet \R^m$ is \textbf{Lipschitz continuous on $\Xcal \subset \R^n$} if it is nonempty-closed-valued on $\Xcal$ and there exists a $L \geq 0$ such that 
    \eqn{
    d_{H}(K(x_1), K(x_2)) \leq L \norm{x_1 - x_2} \ \forall x_1, x_2 \in \Xcal.
    }
    where $d_H$ is the Hausdorff distance~\cite[Def. 4.13]{rockafellar_variational_2009}.
\end{definition}

\subsection*{Parametric Convex Optimization}

Recall problem~\eqref{eqn:intro_qp}. Let $\Jcal=\{1,\dots, p\}$ denote the constraint indices. Let $a_i(x)$ and $b_i(x)$ denote the $i^{\text{th}}$ row of $A(x)$ and $b(x)$ respectively, i.e., $a_i : \Xcal \to \R^m$, $b_i: \Xcal \to \R$.
The $i^{\rm th}$ constraint is active at $x$ \textit{iff} $b_i(x) = a_i(x)^\top\pi(x)$.
A set of constraint with indices $\Ical \subset \Jcal$ are linearly dependent at $x \in \Xcal$ if vectors $a_i(x)$, $i \in \Ical$, are linearly dependent.

Various constraint qualifications have been studied in the literature, see~\cite{bonnans2013perturbation,rockafellar_variational_2009,mestres_regularity_2025}. Of particular interest is \ac{SC}:

\begin{definition}[Section 2.2~\cite{mestres_regularity_2025}]
\label{def:slaters}
    For~\eqref{eqn:intro_qp}, \textbf{\acf{SC}} holds at $x \in \Xcal$ if there exists $u \in \Ucal$ such that $a_i(x)^\top u < b_i(x)$, for all $i \in \Jcal$.
\end{definition}

Although \ac{SC} guarantees existence of a smooth selection~\cite[Proposition~3.1]{pio2019}, finding this function remains an open problem. Under certain conditions, \ac{SC} guarantees continuity (not local Lipschitz) of its solution mapping~\cite{morris15, mestres_regularity_2025}, for example in scalar optimization problems~\cite[Proposition 3.3]{mestres_regularity_2025}.  This does not extend to higher-dimensional cases. Lipschitz continuity of $K$ (in the sense of~\Cref{def:set_lipschitz}) is also not sufficient for solutions of~\eqref{eqn:intro_qp} to be Lipschitz --- solutions are only H{\"o}lder continuous~\cite{bednarczuk2021lipschitz}, \cite[Ex. 1.2]{dontchev1983perturbations}.



The next example demonstrates a more subtle phenomenon, where even when the original problem satisfies strong regularity conditions such as \ac{SC}, its minimizer may still fail to be Lipschitz.

\begin{example}
\label{ex:motivation_ex} Consider the following problem inspired by~\cite{robinson1982}:
\seqn[\label{eq:motivation_ex_eq}]{
\label{eq:mot_ex}
\argmin{u \in \R^2 } \ & \norm{ u}^2\\
\st \ & \bmat{-1 & 0 \\ -1 & -x_1 } u \leq \bmat{-1 \\ -(1+x_2)}
}
for $x =\bmat{x_1 & x_2}^\top\in \Xcal = \{x\in \R^2 : \norm{x}_\infty\leq 2\}$. The problem admits a strictly feasible solution 
\eqn{
\label{eq:feasible_solution_ex2}
\pi_{\rm f}(x)=\bmat{2+\abs{x_2} & 0}^\top,
}
and therefore \ac{SC} is satisfied. 
However, for  $x_2 \leq \frac{1}{2} x_1^2$, the solution is 
\eqn{\pi_{\rm qp}(x) = \begin{cases}
    \bmat{1 & 0 }^\top & \text{if } x_2 \leq 0,\\
    \bmat{1 & \frac {x_2}{x_1}}^\top & \text{if } 0 < x_2 \leq  \frac{1}{2} x_1^2,
\end{cases}
}
which implies that~\eqref{eq:motivation_ex_eq} is not Lipschitz at $\bar x = \bmat{ 0 &  0}^\top$.
\end{example}

\subsection*{Problem Statement}
Examples~\ref{ex:dev_ex} and~\ref{ex:motivation_ex} illustrate that \acs{QP} may not be (locally) Lipschitz continuous. These discontinuities stem from the unbounded sensitivity of the solutions about the point where their active constraints have linearly dependent rows in $A(x)$. Small perturbations in $x$ can cause abrupt changes in the active constraint set, leading to sharp, potentially unbounded changes in the solution along the active affine constraints. Other similar examples can be found in \cite{liu_sensitivity_1995, mestres_regularity_2025}. Thus, the goal of this paper is the following:
\begin{problem}
    Given an optimization problem~\eqref{eqn:intro_qp}, design a $\Phi: \Xcal \toSet \R^m$ such that $\Phi(x) \subset K(x)$ for all $x \in \Xcal$ and the modified optimization problem~\eqref{eqn:intro_socp} 
    admits a unique solution $\pi: \Xcal \to \Ucal$ for every $x \in \Xcal$ and such that  $\pi$ is Lipschitz.
    \label{prob:problem}
\end{problem}


We make the following mild assumptions: 
\begin{assumption}~
    \begin{enumerate}
        \item Each row of $A:\Xcal \to \R^{p\times m}$ has unit-norm, i.e., $\norm{a_i(x)}=1$, $\forall i \in \Jcal$. 
        \item The functions $a_i: \Xcal \to \R^{m}$, $b_i: \Xcal \to \R$, and $\pi_{\rm{des}}: \Xcal \to \R^m$ are Lipschitz continuous, with constants $L_{a_i}, L_{b_i}, L_{\pi_{\rm{des}}} \geq 0$ respectively. 
    \end{enumerate}
    \label{assum:Ab}
\end{assumption}


\section{SOLUTION}
\label{sec:main}

\subsection{Main Result}

Our key idea is that if we know that there exists a $\pi_{\rm f}: \Xcal \to \Ucal$ that is Lipschitz on $\Xcal$ and satisfies the constraints, we can exploit this solution to construct a solution to~\cref{prob:problem}. We formalize this as follows:
\begin{assumption}~
There exists $\pi_{\rm f}: \Xcal \to \Ucal$ such that 
        \eqn{
        \pi_{\rm f}(x) \in K(x) \quad \forall x \in \Xcal,
        }
        $\pi_{\rm f}$ is Lipschitz on $\Xcal$, and $\pi_{\rm f}$ is bounded: $\exists \overline U_{\rm f} >0$ such that $\norm{ \pi_{\rm f}(x)} \leq \overline{U}_{\rm f}$ $\forall x \in \Xcal$.
    \label{assum:main}
\end{assumption} 


\begin{remark}
   Note that $\pi_{\rm f}$ directly addresses~\Cref{prob:problem}: $\Phi(x) = \{ \pi_{\rm f}(x) \}$ is a valid solution to~\Cref{prob:problem}. However, $\pi_{\rm f}$ does not consider the objective of the original \ac{QP} and thus can lie far from the desired solution. As shown in~\Cref{fig:problem}, the new formulation~\eqref{eqn:socp_r} leverages $\pi_{\rm f}$ to reduce this conservatism, while preserving Lipschitz continuity. While such a $\pi_{\rm f}$ might not be easily available for general problems, we propose several methods to construct $\pi_{\rm f}$ and thus relax the assumption later in~\Cref{sec:pi_f}.
\end{remark}

We propose the following definition for $\Phi$:
\eqn{
\Phi(x) = \{ u \in \R^m : g_i(x, u) \leq 0, \forall i \in \Jcal\},
}
where
\eqn{
\label{eqn:g_i}
g_i(x, u) = \norm{ u - \pi_{\rm f}(x)} - (b_i - a_i(x)^\top  \pi_{\rm f}(x)).
}

This corresponds to the following modified problem:
\seqn[\label{eqn:socp_r}]{
\pi_{\rm socp}(x) = \argmin{u \in \R^m} \ &   \norm{ u - \pi_{\rm{des}}(x)}^2 \\
\st \ & \norm{ u - \pi_{\rm f}(x)} \leq r(x)\label{eqn:socp_cons}
}
where 
\eqn{
r(x) = \min_{i \in \Jcal} \ ( b_i(x) - a_i(x)^\top \pi_{\rm f}(x) ).
}


The $K(x)$ and $\Phi(x)$ sets are depicted in~\Cref{fig:example_1}. The blue region shows $K(x)$, while the orange region represents $\Phi(x)$ of the proposed reformulation. $\Phi(x)$ is a ball centered at $\pi_{\rm f}(x)$ with radius $r(x)$.

While the original problem~\eqref{eqn:intro_qp} is a \ac{QP}, \eqref{eqn:socp_r} is a \ac{SOCP}. Both are convex problems that can be solved efficiently~\cite{convex_opt_boyd}. However, in this case, we can also show that \ac{SOCP} has a closed-form solution. Using a shift of variables, the solution to~\eqref{eqn:socp_r} can equivalently be expressed as
    \eqn{
        \label{eq:v(x)}
        \pi_{\rm socp}(x) = \pi_{\rm f}(x) + v(x),
    }
    where $v: \Xcal \to \R^m$ is given by 
    \eqnN{
    v(x) &= \argmin{v \in \R^m} \norm{ v - v_{\rm des}(x)}^2 \ \st \ \norm{v} \leq r(x),
    }
    and $v_{\rm des}(x) = \pi_{\rm{des}}(x) - \pi_{\rm f}(x)$. 
    This leads to the  following closed-form solution for $\pi_{\rm socp}$:
    \eqn{
    \pi_{\rm socp}(x) = \pi_{\rm f}(x) + \min \left( r(x), \norm{v_{\rm des}(x)} \right)   \frac {v_{\rm des}(x)}{\norm{v_{\rm des}(x)}} .
    \label{eq:analytic_sol}
    }

We characterize~\eqref{eqn:socp_r} as our main result:
\begin{theorem}
\label{theorem:main}
Let \Cref{assum:Ab}, \ref{assum:main} hold. Then, 
\begin{enumerate}
    \item there exists a unique minimizer of the optimization problem~\eqref{eqn:socp_r} for all $x \in \Xcal$. Let $\pi_{\rm socp}: \Xcal \to \Ucal$ denote this solution. 
    \item The solution satisfies $\pi_{\rm socp}(x) \in K(x)$ for all $x \in \Xcal$. 
    \item The solution $\pi_{\rm socp}$ is Lipschitz on $\Xcal$.
\end{enumerate}
\end{theorem}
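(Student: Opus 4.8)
The plan is to prove the three claims in order, leaning on the closed-form expression~\eqref{eq:analytic_sol} as much as possible. For claim~(1), the key observation is that the feasible set~\eqref{eqn:socp_cons} is a closed Euclidean ball of radius $r(x)$ centered at $\pi_{\rm f}(x)$, which is nonempty precisely because $r(x) \geq 0$. This nonnegativity is exactly the content of \Cref{assum:main}: $\pi_{\rm f}(x) \in K(x)$ means $a_i(x)^\top \pi_{\rm f}(x) \leq b_i(x)$ for all $i$, hence $b_i(x) - a_i(x)^\top \pi_{\rm f}(x) \geq 0$ for all $i$, so $r(x) = \min_i (b_i(x) - a_i(x)^\top \pi_{\rm f}(x)) \geq 0$. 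Since the objective $\norm{u - \pi_{\rm des}(x)}^2$ is strongly convex and continuous and the feasible set is nonempty, compact, and convex, a unique minimizer exists; the shift-of-variables argument preceding the theorem then yields the explicit formula~\eqref{eq:analytic_sol}. (One should note the formula is continuous even as $\norm{v_{\rm des}(x)} \to 0$, since in that regime the projection is simply $\pi_{\rm f}(x)$ itself.)

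For claim~(2), I would show $\Phi(x) \subset K(x)$ pointwise, which immediately gives $\pi_{\rm socp}(x) \in K(x)$. Take any $u \in \Phi(x)$, so $\norm{u - \pi_{\rm f}(x)} \leq r(x) \leq b_i(x) - a_i(x)^\top \pi_{\rm f}(x)$ for every $i \in \Jcal$. Then by Cauchy–Schwarz and the unit-norm normalization $\norm{a_i(x)} = 1$ from \Cref{assum:Ab},
\[
a_i(x)^\top u = a_i(x)^\top \pi_{\rm f}(x) + a_i(x)^\top (u - \pi_{\rm f}(x)) \leq a_i(x)^\top \pi_{\rm f}(x) + \norm{u - \pi_{\rm f}(x)} \leq b_i(x),
\]
so $u \in K(x)$. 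This is where the unit-norm assumption earns its keep: it converts the ball constraint into the half-space constraints exactly.

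For claim~(3), I would establish Lipschitz continuity of each ingredient of~\eqref{eq:analytic_sol} and then compose. First, $r$ is Lipschitz on $\Xcal$: each map $x \mapsto b_i(x) - a_i(x)^\top \pi_{\rm f}(x)$ is Lipschitz (sum/product of Lipschitz, bounded functions — using that $\pi_{\rm f}$ is bounded by $\overline U_{\rm f}$, that $\norm{a_i(x)} = 1$ is bounded, and the $L_{a_i}, L_{b_i}$ constants), and the pointwise minimum of finitely many Lipschitz functions is Lipschitz. Next, $v_{\rm des} = \pi_{\rm des} - \pi_{\rm f}$ is Lipschitz and bounded. The remaining work is to show the projection map $(\text{center}, \text{radius}, \text{target}) \mapsto \pi_{\rm f} + \min(r, \norm{v_{\rm des}}) v_{\rm des}/\norm{v_{\rm des}}$ is Lipschitz as a function of $x$; equivalently, $x \mapsto \operatorname{Proj}_{\ball(0, r(x))}(v_{\rm des}(x))$ is Lipschitz. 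I expect this to be the main obstacle, because the normalized vector $v_{\rm des}/\norm{v_{\rm des}}$ is itself not Lipschitz near the origin. The clean way around it: use the standard fact that Euclidean projection onto a fixed convex set is $1$-Lipschitz, combined with a bound on how the projection moves when the set (here, a ball) is perturbed. Concretely, for balls $\ball(0, r_1)$ and $\ball(0, r_2)$ one has $d_H(\ball(0,r_1), \ball(0,r_2)) = |r_1 - r_2|$, and projection onto a moving convex set whose Hausdorff variation is controlled is itself controlled; so
\[
\norm{\operatorname{Proj}_{\ball(0,r(x_1))}(v_{\rm des}(x_1)) - \operatorname{Proj}_{\ball(0,r(x_2))}(v_{\rm des}(x_2))} \leq \norm{v_{\rm des}(x_1) - v_{\rm des}(x_2)} + |r(x_1) - r(x_2)|,
\]
which is Lipschitz in $x$ by the earlier steps. (Alternatively, one verifies directly that $t \mapsto \min(r, t)\, w$ for unit $w$ behaves well, but the projection-nonexpansiveness route avoids the apparent singularity entirely.) Adding the Lipschitz constant of $\pi_{\rm f}$ from the triangle inequality applied to~\eqref{eq:v(x)} then gives an explicit overall Lipschitz constant $L_{\pi_{\rm socp}} \leq L_{\pi_{\rm f}} + L_{v_{\rm des}} + L_r$, completing the proof.
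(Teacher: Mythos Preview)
Your proposal is correct and follows essentially the same route as the paper: claims~(1) and~(2) match the paper's argument almost verbatim, and for claim~(3) the paper likewise establishes Lipschitz constants for $r$ and $v_{\rm des}$ and then invokes a dedicated lemma (\Cref{lemma:projection_1}) proving exactly your projection inequality $\norm{\operatorname{Proj}_{\ball(0,r_1)}(v_1) - \operatorname{Proj}_{\ball(0,r_2)}(v_2)} \leq \norm{v_1 - v_2} + |r_1 - r_2|$ via a short case analysis, arriving at the same constant $L_{\pi_{\rm f}} + L_{v_{\rm des}} + L_r$. One caveat: the general principle you invoke---that Hausdorff control of a moving convex set controls the projection---is false for arbitrary convex sets, but the specific inequality you display for balls is correct and is precisely what the paper verifies.
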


\begin{proof}[Proof]
    [First Claim] By \Cref{assum:main}, the feasible space for~\eqref{eqn:socp_r} is non-empty: $\pi_{\rm f}(x) \in \Phi(x)$.  Since the objective $\norm{u-\pi_{\rm{des}}(x)}^2$ is strongly convex in $u$, there must exist a unique minimizer~\cite[Section~4.2.1]{convex_opt_boyd}.

    [Second Claim] Consider any feasible solution $\pi \in \Phi(x)$. By~\Cref{assum:Ab}.1, we know $\norm{a_i(x)}=1$ for $i\in \Jcal$. Also, since the solution satisfies~\eqref{eqn:socp_r} for all $i \in \Jcal$, 
    \eqnN{b_i(x) - a_i(x)^\top \pi_{\rm f}(x) &\geq \norm{\pi-\pi_{\rm f}(x)} \\
    & = \norm{a_i(x)}\norm{\pi-\pi_{\rm f}(x)}\\
    & \geq a_i(x)^\top (\pi-\pi_{\rm f}(x))
    }
Therefore $b_i(x) \geq a_i(x)\pi$, i.e., $\pi \in K(x)$.

[Third Claim] Notice $v_{\rm des}$ is Lipschitz with constant $L_{v_{\rm des}} = L_{\pi_{\rm {des}}} + L_{\pi_{\rm f}}$.
    Recall $r(x) = \min_{i \in \Jcal}(r_i(x))$, where $r_i(x) = b_i(x) - a_i(x)^\top \pi_{\rm f}(x)$. Therefore, each $r_i$ has Lipschitz constant\footnote{Recall that for $f, g: X \to Y$, the Lipschtiz constant of $fg$ is $L = (\sup_{x \in X} \norm{g(x)}) L_f + (\sup_{x \in X} \norm{f(x)}) L_g $. See~\cite[Prop. 2.3.4]{lipschitz_functions_textbook}.  } $L_{r_i} = L_{b_i} + (L_{\pi_{\rm f}} +  L_{a_i} \overline{U_f})$. Therefore by~\cite[Prop. 2.3.9]{lipschitz_functions_textbook}, $r$ has the Lipschitz constant $L_{r} = \max_{i \in \Jcal}(L_{r_i})$. 
    Finally, using~\Cref{lemma:projection_1} we know that $v$ has Lipschitz constant $L_v = L_{v_{\rm des}} + L_r$. Hence $\pi_{\rm socp}$ is Lipschitz  with constant $L= L_{\pi_{\rm f}} + L_{v} = L_{\pi_{\rm des}} + 2L_{\pi_{\rm f}} + L_r$.
\end{proof}

The proof relies on~\Cref{lemma:projection_1}, provided in the appendix. 


\begin{remark}
Note that~\Cref{theorem:main} does not rely on the structural properties or conditions of the~\ac{QP}, such as constraint qualifications. 
Instead, we reformulate it as an~\ac{SOCP} in the form of~\eqref{eqn:socp_r}, which by construction is guaranteed to yield a Lipschitz-continuous solution.
\end{remark}

We can also establish local Lipschitz continuity:
\begin{corollary}
    \label{cor:locally_lip}
    Suppose \Cref{assum:Ab} holds except that $a_i, b_i, \pi_{\rm des}$ are locally Lipschitz, and that $\pi_{\rm f}: \Xcal \to \Ucal$ is locally Lipschitz and satisfies $\pi_{\rm f}(x) \in K(x)$ for all $x \in \Xcal$. Then Claims 1 and 2 of~\Cref{theorem:main} hold, and $\pi_{\rm socp}$ is locally Lipschitz continuous on $\Xcal$. 
\end{corollary}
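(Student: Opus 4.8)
The plan is to \emph{localize} the proof of \Cref{theorem:main}. First, observe that Claims~1 and~2 of \Cref{theorem:main} never invoke any Lipschitz property: uniqueness of the minimizer follows solely from strong convexity of $u \mapsto \norm{u - \pi_{\rm des}(x)}^2$ together with nonemptiness of $\Phi(x)$, and the latter holds because $\pi_{\rm f}(x) \in K(x)$ forces $r(x) = \min_{i \in \Jcal}(b_i(x) - a_i(x)^\top \pi_{\rm f}(x)) \geq 0$, hence $\pi_{\rm f}(x) \in \Phi(x)$. Likewise, the inclusion $\pi_{\rm socp}(x) \in K(x)$ uses only \Cref{assum:Ab}.1 (unit-norm rows) and Cauchy--Schwarz, exactly as in the Second Claim. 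So Claims~1 and~2 transfer verbatim under the weakened hypotheses.

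For local Lipschitz continuity, fix an arbitrary $\bar x \in \Xcal$. Each of the finitely many maps $a_1,\dots,a_p$, $b_1,\dots,b_p$, $\pi_{\rm des}$, and $\pi_{\rm f}$ is Lipschitz on some neighborhood of $\bar x$; intersecting these neighborhoods with a bounded ball about $\bar x$ yields a single \emph{bounded} neighborhood $\Bcal \subset \Xcal$ on which all of them are simultaneously Lipschitz. On $\Bcal$, since $\pi_{\rm f}$ is Lipschitz on a bounded set it is bounded there, say $\norm{\pi_{\rm f}(x)} \leq \overline{U}_{\rm f}^{\Bcal}$, while $\norm{a_i(x)} = 1$ holds globally. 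I would then replay the Third Claim computation with $\Bcal$ in place of $\Xcal$: $v_{\rm des} = \pi_{\rm des} - \pi_{\rm f}$ is Lipschitz on $\Bcal$; each $r_i = b_i - a_i^\top \pi_{\rm f}$ is Lipschitz on $\Bcal$ by the sum rule together with the product rule applied with the local bound $\overline{U}_{\rm f}^{\Bcal}$ (\cite[Prop.~2.3.4]{lipschitz_functions_textbook}); $r = \min_{i \in \Jcal} r_i$ is Lipschitz on $\Bcal$ (\cite[Prop.~2.3.9]{lipschitz_functions_textbook}); and by \Cref{lemma:projection_1} the projection $v$ is Lipschitz on $\Bcal$. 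Hence $\pi_{\rm socp} = \pi_{\rm f} + v$ is Lipschitz on $\Bcal$, i.e.\ locally Lipschitz at $\bar x$. Since $\bar x$ was arbitrary, $\pi_{\rm socp}$ is locally Lipschitz on $\Xcal$.

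I do not expect a genuine obstacle here; the only points requiring care are (i)~arranging that all the uniform bounds and Lipschitz estimates hold on one common neighborhood, which is why $\Bcal$ must be taken bounded so that the product-rule constants are finite, and (ii)~confirming that \Cref{lemma:projection_1} applies when $r(x) \geq 0$ with possibly $r(\bar x) = 0$ --- the closed-form expression~\eqref{eq:analytic_sol} shows the projection remains well behaved in that case. The resulting Lipschitz constant depends on $\bar x$ through $\Bcal$, which is precisely what local Lipschitz continuity allows.
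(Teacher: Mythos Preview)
Your proposal is correct and is exactly what the paper does: it omits the proof, saying it ``follows that of~\Cref{theorem:main}'' and noting in a footnote that boundedness of $\pi_{\rm f}$ is unnecessary because the product of locally Lipschitz functions is locally Lipschitz \cite[Prop.~2.3.4]{lipschitz_functions_textbook}. Your localization argument---restricting to a bounded common neighborhood $\Bcal$ so that $\pi_{\rm f}$ becomes bounded there and then replaying the Third Claim verbatim---is precisely the intended route.
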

The proof is omitted for brevity but follows that of~\Cref{theorem:main}. Notice here $u_f$ need not be bounded.\footnote{In particular, the boundedness of $\pi_{\rm f}$ is only needed to ensure $r_i$ is Lipschitz. When $a_i$ and $\pi_{\rm f}$ are both locally Lipschitz, $r_i$ is also locally Lipschitz, even when $\pi_{\rm f}$ is unbounded \cite[Prop. 2.3.4]{lipschitz_functions_textbook}.}
    

 

\subsection{Constructing the feasible solution $\pi_{\rm f}$}
\label{sec:pi_f}

Here we relax~\Cref{assum:main} by providing methods to construct $\pi_{\rm f}$. Naturally, the performance of our \ac{SOCP}~\eqref{eqn:socp_r} depends on $\pi_{\rm f}$: the farther $\pi_{\rm f}$ lies from the boundary of the feasible set $K(x)$, the larger $\Phi(x) \subset K(x)$, and consequently, the closer $\pi_{\rm socp}(x)$ can be to the desired $\pi_{\rm des}(x)$. Although $\pi_{\rm f}$ can be hand-crafted for specific problems (as in the examples above), we identify two candidates for $\pi_{\rm f}$ for general problems: the Analytic center and the Steiner point.

\textbf{Analytic Center:}
Suppose $K(x)$ as defined in~\eqref{eqn:K} is compact and has nonempty interior. Then the analytic center~\cite[Ex. 4.6]{convex_opt_boyd} of $K(x)$ is the unique solution to
\eqn{
\operatorname{Ac}(K(x)) = \argmin{u \in \R^m} \ & -\sum_{i=1}^{p} \log \left( b_i(x)-a_i(x)u \right).
\label{eq:analytic_center_}
}



This can be used to define the feasible solution $\pi_{\rm f}$:

\begin{corollary}
    Suppose (i)~\Cref{assum:Ab} holds, (ii)~$a_i, b_i \in C^2$, (iii)~$K$ is compact-valued, and (iv)~\ac{SC} holds for~\eqref{eqn:intro_qp} for all $ x \in \Xcal$. Then, if $\pi_{\rm f}(x) =\operatorname{Ac}(K(x))$, $\pi_{\rm socp}$ in~\eqref{eqn:socp_r} is locally Lipschitz continuous on $\Xcal$.
    \label{cor:main_and_analytic}
\end{corollary}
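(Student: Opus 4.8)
The plan is to reduce the claim to \Cref{cor:locally_lip}. Under hypotheses (i)--(ii), the functions $a_i, b_i \in C^2$ are in particular locally Lipschitz, and $\pi_{\rm des}$ is Lipschitz by \Cref{assum:Ab}, hence locally Lipschitz; so it suffices to verify that $\pi_{\rm f}(x) = \operatorname{Ac}(K(x))$ is (a)~well defined for every $x \in \Xcal$, (b)~feasible, i.e.\ $\operatorname{Ac}(K(x)) \in K(x)$, and (c)~locally Lipschitz on $\Xcal$. Parts (a) and (b) are immediate: by (iii) each $K(x)$ is compact and by Slater's condition (iv) $\Int K(x) \neq \emptyset$, so, as recalled around~\eqref{eq:analytic_center_}, the barrier $-\sum_{i \in \Jcal} \log(b_i(x) - a_i(x)^\top u)$ is coercive and strictly convex on $\Int K(x)$ and therefore attains its minimum at a unique point $\operatorname{Ac}(K(x)) \in \Int K(x) \subset K(x)$.

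The substantive step is (c), which I would establish via the implicit function theorem applied to the stationarity condition of~\eqref{eq:analytic_center_}. Fix $x_0 \in \Xcal$ and set $u_0 = \operatorname{Ac}(K(x_0))$. The set $\Omega = \{(x,u) \in \Xcal \times \R^m : b_i(x) - a_i(x)^\top u > 0 \ \forall i \in \Jcal\}$ is open, since the maps $(x,u) \mapsto b_i(x) - a_i(x)^\top u$ are continuous, and contains $(x_0, u_0)$ because $u_0 \in \Int K(x_0)$. On $\Omega$ define the gradient in $u$ of the barrier,
\eqn{ F(x,u) = \sum_{i \in \Jcal} \frac{a_i(x)}{b_i(x) - a_i(x)^\top u}, }
which is $C^1$ on $\Omega$ because $a_i, b_i \in C^2$, and $F(x_0,u_0) = 0$ by optimality. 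Its partial Jacobian
\eqn{ D_u F(x,u) = \sum_{i \in \Jcal} \frac{a_i(x)\, a_i(x)^\top}{\left(b_i(x) - a_i(x)^\top u\right)^2} }
is positive definite at $(x_0,u_0)$: since $K(x_0)$ is compact it contains no line, hence the normals $\{a_i(x_0)\}_{i \in \Jcal}$ span $\R^m$, which forces $D_u F(x_0,u_0) \succ 0$ and in particular invertible.

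By the implicit function theorem there is a neighborhood $\Ncal$ of $x_0$ and a $C^1$ map $\psi : \Ncal \to \R^m$ with $\psi(x_0) = u_0$ and $F(x, \psi(x)) = 0$ on $\Ncal$; since $F(x, \cdot)$ is the gradient of a strictly convex function on $\Int K(x)$, its unique zero there is $\operatorname{Ac}(K(x))$, so $\psi(x) = \operatorname{Ac}(K(x))$ on $\Ncal$. Hence $\operatorname{Ac}(K(\cdot))$ is $C^1$, in particular locally Lipschitz, at $x_0$; since $x_0 \in \Xcal$ was arbitrary, it is locally Lipschitz on $\Xcal$, and \Cref{cor:locally_lip} then yields the conclusion. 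I expect step (c) to be the main obstacle, with two points needing care: (1)~nonsingularity of the barrier Hessian at the analytic center --- this is where compactness of $K(x)$ is essential, as it forces the constraint normals to span $\R^m$ --- and (2)~checking that the locally defined implicit solution $\psi$ really is the analytic center rather than some spurious stationary point, which follows from strict convexity and uniqueness of the interior minimizer.
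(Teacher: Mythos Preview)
Your proposal is correct and takes essentially the same approach as the paper: reduce to \Cref{cor:locally_lip} by showing $\pi_{\rm f}(x)=\operatorname{Ac}(K(x))$ is locally Lipschitz via the implicit function theorem applied to the barrier's first-order condition, which is precisely the content of the paper's appendix \Cref{lemma:analytic_center}. Your treatment is in fact slightly more careful, since you explicitly invoke compactness of $K(x)$ to guarantee that $\{a_i(x)\}$ spans $\R^m$ and hence that the barrier Hessian is nonsingular, a point the paper's lemma asserts without justification.
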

\begin{proof}
    By~\Cref{lemma:analytic_center} (see Appendix), $\pi_{\rm f}$ is locally Lipschitz on $\Xcal$. Hence the claim holds by~\Cref{cor:locally_lip}.
\end{proof}


\textbf{Steiner Point:}
The Steiner point is a Lipschitz selection of $K$ provided the set-valued map $K$ is Lipschitz in the sense of~\Cref{def:set_lipschitz}~\cite[Section 2.4.3]{freeman2008robust}. The Steiner point of a nonempty compact convex set $C \subset \R^m$ is defined by 
\eqn{
\operatorname{St}(C) = \frac{1}{\int_{B} d\mu} \int_{B} \nabla \sigma_{C} \ d\mu,
}
where $B$ is the unit ball in $\R^m$, $\mu$ is the Lebesgue measure, and $\sigma_C: \R^m \to \R$ is the support function of $C$~\cite[Eq. 2.16]{freeman2008robust}. For $K: \Xcal \toSet \R^m$ with Lipschitz constant $L_K$, the Lipschitz constant for $x \mapsto \operatorname{St}(K(x))$ is $L = m L_K$. 
\begin{corollary}
    Suppose $K: \Xcal \toSet \R^m$ is Lipschitz continuous and compact for all $x \in \Xcal$. If $\pi_{\rm f}(x) = \operatorname{St}(K(x))$, then $\pi_{\rm socp}$ in~\eqref{eqn:socp_r} is Lipschitz continuous on $\Xcal$. 
\end{corollary}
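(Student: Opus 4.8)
The plan is to obtain this corollary as a direct specialization of \Cref{theorem:main}, taking $\pi_{\rm f}(x) = \operatorname{St}(K(x))$ and checking that this choice meets \Cref{assum:main} (with \Cref{assum:Ab} in force, as needed to invoke \Cref{theorem:main}). First I would record that $\operatorname{St}(K(x))$ is well defined for every $x$: $K(x)$ is nonempty by the standing hypothesis, convex because it is the polyhedron~\eqref{eqn:K}, and compact by assumption, so its support function and hence its Steiner point exist. It then remains to verify the three requirements of \Cref{assum:main}: (i)~$\pi_{\rm f}(x)\in K(x)$ for all $x$; (ii)~$\pi_{\rm f}$ is Lipschitz on $\Xcal$; and (iii)~$\pi_{\rm f}$ is bounded.

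For (i) I would invoke the classical property that the Steiner point of a nonempty compact convex set always lies in that set; this gives $\pi_{\rm f}(x)\in K(x)$, and in particular shows the feasible set of~\eqref{eqn:socp_r} is nonempty at every $x$. For (ii) I would use the Lipschitz-selection bound for the Steiner map quoted immediately before the corollary (from \cite[Section~2.4.3]{freeman2008robust}): since $K$ is Lipschitz continuous in the sense of \Cref{def:set_lipschitz} with some constant $L_K$, the map $x\mapsto\operatorname{St}(K(x))$ is Lipschitz on $\Xcal$ with constant $L_{\pi_{\rm f}} = m L_K$. For (iii) I would show the family $\{K(x) : x\in\Xcal\}$ is uniformly bounded: fixing $x_0\in\Xcal$, the Hausdorff bound forces every point of $K(x)$ to lie within $L_K\norm{x-x_0}$ of the compact set $K(x_0)$, so $\norm{\pi_{\rm f}(x)} \le \sup_{u\in K(x_0)}\norm{u} + L_K\sup_{x'\in\Xcal}\norm{x'-x_0} =: \overline{U}_{\rm f}$, which is finite on a bounded parameter domain.

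With (i)--(iii) in hand, $\pi_{\rm f}$ satisfies \Cref{assum:main}, so \Cref{theorem:main} applies and its third claim gives that $\pi_{\rm socp}$ is Lipschitz on $\Xcal$, with an explicit constant $L = L_{\pi_{\rm des}} + 2L_{\pi_{\rm f}} + L_r$ where $L_{\pi_{\rm f}} = m L_K$ and $L_r = \max_{i\in\Jcal}\big(L_{b_i} + L_{\pi_{\rm f}} + L_{a_i}\overline{U}_{\rm f}\big)$. I expect the main obstacle to be step (ii): the $mL_K$ Lipschitz estimate for the Steiner map is the one nonelementary ingredient and is imported wholesale from \cite{freeman2008robust}. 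Step (iii) is the other point that needs care, since ``Lipschitz and compact-valued'' yields uniform boundedness of $\{K(x)\}$ only when the parameter set $\Xcal$ is bounded (if $\Xcal$ is unbounded one still gets local Lipschitz continuity of $\pi_{\rm socp}$ from \Cref{cor:locally_lip}); everything else is a routine check.
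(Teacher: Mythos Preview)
Your proposal is correct and follows exactly the route the paper sets up: the discussion immediately preceding the corollary records that the Steiner point is a Lipschitz selection of $K$ with constant $mL_K$, so the (unwritten) proof is precisely to verify \Cref{assum:main} for $\pi_{\rm f}=\operatorname{St}(K(\cdot))$ and invoke \Cref{theorem:main}. Your observation that boundedness of $\pi_{\rm f}$ (and hence global Lipschitz continuity via \Cref{theorem:main} rather than merely local Lipschitz via \Cref{cor:locally_lip}) tacitly requires $\Xcal$ to be bounded is a valid caveat that the paper does not make explicit.
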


Note, even when $K$ is Lipschitz, it is known that $\pi_{\rm qp}$ as in~\eqref{eqn:intro_qp} is only H{\"o}lder continuous (see \cite{bednarczuk2021lipschitz}, \cite[Ex. 1.2]{dontchev1983perturbations}). Our proposed \ac{SOCP} yields Lipschitz continuous solutions. 


\subsection{A Conjecture}

We now propose a convex \ac{QCQP} that we conjecture is also Lipschitz continuous. Consider the following definition for $\Phi$: 
\eqn{
\Phi_{\rm qcqp}(x) = \{ u \in \R^m : h_i(x, u) \leq 0 \}
}
where for any fixed $k > 0$, 
\eqn{
h_i(x, u) = \norm{ u - \pi_{\rm f}(x)}^2 - 2 k \left( b_i(x) - a_i(x)^\top u \right). \label{eqn:h_i}
}
Compare this with $g_i$ in~\eqref{eqn:g_i}. There are three key differences: (i)~the norm is squared in the first term, (ii)~we introduced a tuning parameter $k > 0$, (iii)~the last term is $a_i(x)^\top u$, not $a_i(x)^\top \pi_{\rm f}(x)$. This changes the structure of $\Phi(x)$ from an intersection of concentric balls centered at $\pi_{\rm f}(x)$ to an intersection of non-concentric balls containing the point $\pi_{\rm f}(x)$. This design expands the feasible space and introduces a tuning parameter that allows the user to control the conservatism. By some algebra, the modified problem is 
\seqn[\label{eqn:qcqp}]{
\argmin{u \in \R^m} \ & \norm{u - \pi_{\rm des}(x)}^2\\
\st \ & \norm{u - c_i(x)}^2 \leq d_i(x),
}
where
\eqnN{
c_i(x) &= \pi_{\rm f}(x) - k a_i(x),\\
d_i(x)&= k^2 + 2 k \left( b_i(x) - a_i(x)^\top \pi_{\rm f}(x) \right).
}

\begin{conjecture}
\label{conjecture:qcqp}
    Let \Cref{assum:Ab}, \ref{assum:main} hold and $k > 0$.  Then, 
\begin{enumerate}
    \item there exists a unique minimizer of the optimization problem~\eqref{eqn:qcqp} for all $x \in \Xcal$. Let $\pi_{\rm qcqp}: \Xcal \to \Ucal$ denote this solution. 
    \item The solution satisfies $\pi_{\rm qcqp}(x) \in K(x)$ for all $x \in \Xcal$. 
    \item The solution $\pi_{\rm qcqp}$ is Lipschitz on $\Xcal$.
\end{enumerate}
\end{conjecture}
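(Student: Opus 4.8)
\emph{Plan.} The plan is to dispatch Claims~1 and~2 exactly along the lines of the proof of \Cref{theorem:main}, and to attack Claim~3 by the same change of variables, reducing it to a projection‑stability estimate. The hard part will be the last step, and I expect it is the reason the statement is only conjectured.

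\emph{Claims 1 and 2.} First I would note $\pi_{\rm f}(x)\in\Phi_{\rm qcqp}(x)$: since $\pi_{\rm f}(x)\in K(x)$, $b_i(x)-a_i(x)^\top\pi_{\rm f}(x)\geq 0$, so $h_i(x,\pi_{\rm f}(x))=-2k\big(b_i(x)-a_i(x)^\top\pi_{\rm f}(x)\big)\leq 0$ and $d_i(x)=k^2+2k\big(b_i(x)-a_i(x)^\top\pi_{\rm f}(x)\big)\geq k^2>0$. Hence $\Phi_{\rm qcqp}(x)$ is a nonempty, compact, convex intersection of genuine closed balls, and strong convexity of the objective yields a unique minimizer \cite[Sec.~4.2.1]{convex_opt_boyd}. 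For Claim~2, any $u\in\Phi_{\rm qcqp}(x)$ obeys $2k\big(b_i(x)-a_i(x)^\top u\big)\geq\|u-\pi_{\rm f}(x)\|^2\geq 0$, and $k>0$ gives $a_i(x)^\top u\leq b_i(x)$ for all $i$, i.e.\ $u\in K(x)$.

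\emph{Claim 3, the approach.} As in \Cref{theorem:main}, I would shift variables: $\pi_{\rm qcqp}(x)=\pi_{\rm f}(x)+w(x)$, $w(x)=\operatorname{Proj}_{\tilde\Phi(x)}\!\big(v_{\rm des}(x)\big)$, $v_{\rm des}(x)=\pi_{\rm des}(x)-\pi_{\rm f}(x)$, $\tilde\Phi(x)=\{w:\|w\|^2+2k\,a_i(x)^\top w\leq 2k(b_i(x)-a_i(x)^\top\pi_{\rm f}(x)),\ \forall i\}\ni 0$. Using $\|a_i\|=1$ and boundedness of $\pi_{\rm f}$, the centers $c_i(x)=\pi_{\rm f}(x)-k\,a_i(x)$ and the squared radii $d_i(x)\geq k^2$ are Lipschitz, hence so are the radii $\sqrt{d_i(x)}$. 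Claim~3 then reduces to: the projection of a Lipschitz point onto a Lipschitz‑varying intersection of balls of radii $\geq k$, all containing $0$, is Lipschitz. I would try to prove this from the variational inequalities $\langle\pi_{\rm qcqp}(x)-\pi_{\rm des}(x),\,z-\pi_{\rm qcqp}(x)\rangle\geq 0$ for all $z\in\Phi_{\rm qcqp}(x)$, written at two nearby parameters, tested against feasible points of the other instance, and combined with the $1$‑strong convexity of the objective ($\nabla^2_u\|u-\pi_{\rm des}\|^2=2I$) and of each constraint ($\nabla^2_u h_i=2I$, $\|\nabla_u h_i\|=2\|u-c_i\|\geq 2k$ on $\{h_i=0\}$). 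The one nontrivial ingredient is a \emph{feasibility‑restoration} estimate: from a feasible point of $\Phi_{\rm qcqp}(x_1)$, build one of $\Phi_{\rm qcqp}(x_2)$ at cost $O(\|x_1-x_2\|)$.

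\emph{Main obstacle.} This restoration estimate is exactly where the current hypotheses seem to fall short, and I expect \Cref{conjecture:qcqp} as stated to be false. If at some $x_0$ two constraints have dependent normals $a_i(x_0)=-a_j(x_0)$ with vanishing slacks $b_i-a_i^\top\pi_{\rm f}=b_j-a_j^\top\pi_{\rm f}=0$, then $B(c_i(x_0),k)$ and $B(c_j(x_0),k)$ are tangent at $\pi_{\rm f}(x_0)$ and $\Phi_{\rm qcqp}(x_0)=\{\pi_{\rm f}(x_0)\}$. For $x$ near $x_0$ the feasible set is a thin lens whose half‑width $h$ is the common chord of two nearly tangent radius‑$\approx\!k$ circles, so $h^2\approx k\big((b_i-a_i^\top\pi_{\rm f})+(b_j-a_j^\top\pi_{\rm f})+k(2-\|a_i-a_j\|)\big)=O(\|x-x_0\|)$ with this rate tight along suitable paths, whence $h=\Theta(\|x-x_0\|^{1/2})$ --- not $\Theta(\|x-x_0\|)$, unlike the \ac{SOCP} ball of \Cref{theorem:main} whose radius $r(x)$ shrinks only at the Lipschitz rate. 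Whenever $\pi_{\rm des}(x)$ drives $\pi_{\rm qcqp}(x)$ toward a tip of this lens (permissible, as the lens's normal cone at a tip opens to nearly a half‑space), $\pi_{\rm qcqp}$ is only H\"older‑$\tfrac12$ at $x_0$; e.g.\ $m=2$, $a_1\equiv(1,0)$, $a_2(x)=-(\cos x,\sin x)$, $b_1\equiv b_2\equiv|x|$, $\pi_{\rm f}\equiv(0,0)$, $\pi_{\rm des}\equiv(0,1)$ satisfies \Cref{assum:Ab}, \Cref{assum:main} for every $k>0$, yet $\pi_{\rm qcqp}(x)\approx(0,\sqrt{2k|x|})$ near $x=0$. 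I would therefore recommend strengthening \Cref{conjecture:qcqp} with a uniform \ac{SC} condition on $K$, $\min_i\big(b_i(x)-a_i(x)^\top\pi_{\rm f}(x)\big)\geq\delta>0$ for all $x$: this forces $\Phi_{\rm qcqp}(x)$ to contain a fixed‑radius ball about $\pi_{\rm f}(x)$, renders the restoration map Lipschitz with constant controlled by $\delta$, and (I expect) lets the plan above go through. Without it, the most one should claim is that $\pi_{\rm qcqp}$ is globally H\"older‑$\tfrac12$, and locally Lipschitz at every $x$ where \ac{SC} holds for $K$.
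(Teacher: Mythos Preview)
Your arguments for Claims~1 and~2 coincide with the paper's own partial proof: feasibility of $\pi_{\rm f}(x)$ from $d_i(x)\geq k^2$ together with strong convexity gives Claim~1, and the implication $h_i(x,u)\leq 0\Rightarrow b_i(x)-a_i(x)^\top u\geq 0$ gives Claim~2. The paper stops there, stating explicitly that Claim~3 ``remains an open question''; so on the part the paper actually proves, you match it.

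Where you go beyond the paper is in arguing that Claim~3 is \emph{false} as stated, and your counterexample appears to check out. Take $m=2$, $\Xcal=[-1,1]$, $a_1\equiv(1,0)$, $a_2(x)=(-\cos x,-\sin x)$, $b_1=b_2=|x|$, $\pi_{\rm f}\equiv 0$, $\pi_{\rm des}\equiv(0,1)$; both \Cref{assum:Ab} and \Cref{assum:main} hold. At $x=0$ the two constraint balls $B((-k,0),k)$ and $B((k,0),k)$ touch only at the origin, so $\Phi_{\rm qcqp}(0)=\{0\}$ and $\pi_{\rm qcqp}(0)=0$. For small $x>0$ both balls have squared radius $k^2+2kx$; their two intersection points lie on the line $u_1=-u_2\tan(x/2)$, and the upper one has $t_2=(k/2)\sin x+\cos(x/2)\sqrt{k^2\sin^2(x/2)+2kx}=\sqrt{2kx}+O(x)$. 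The outward normals there are approximately $(k,\sqrt{2kx})$ and $(-k,\sqrt{2kx})$, and the direction from the tip to $(0,1)$ lies strictly in their conic hull (both multipliers are $\approx 1/(2\sqrt{2kx})>0$), so the tip is indeed the projection of $\pi_{\rm des}$. Hence $\|\pi_{\rm qcqp}(x)-\pi_{\rm qcqp}(0)\|\sim\sqrt{2k|x|}$, only H\"older-$\tfrac12$ at $0$. Your diagnosis of the mechanism is also correct: the \ac{SOCP} of \Cref{theorem:main} avoids this because its feasible ball shrinks at the Lipschitz rate $r(x)=|x|$ (yielding $\pi_{\rm socp}(x)=(0,|x|)$ in this example), whereas the \ac{QCQP} lens has transverse half-width $\sim\sqrt{k\,r(x)}$. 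The strengthened hypothesis you propose, a uniform slack $\min_i\big(b_i-a_i^\top\pi_{\rm f}\big)\geq\delta>0$, is precisely what rules out this degeneration and is the natural extra assumption under which your projection-stability plan could plausibly be completed.
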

\begin{proof}[Partial Proof]
[First Claim] Notice that since $b_i(x) - a_i(x)^\top \pi_{\rm f}(x) \geq 0$ by \Cref{assum:main} and $k > 0$, we have $d_i(x) \geq k^2$. Since $\norm{a_i(x)} = 1$ by~\Cref{assum:Ab}.1,  $u = \pi_{\rm f}(x)$ is a feasible solution. Since the objective is strictly convex, there always exists a unique solution to~\eqref{eqn:qcqp}.

[Second Claim] Consider any feasible solution $u$. Then $h_i(x, u) \leq 0$. Then, $\norm{ u - \pi_{\rm f}(x)}^2 \leq 2 k (b_i(x) - a_i(x)^\top u)$, which can only be satisfied if $b_i(x) - a_i(x)^\top u \geq 0$. Since this is true for all $i \in \Jcal$, $u \in K(x)$.

[Third Claim] Proving this remains an open question. 
\end{proof}


Importantly, one can construct an example where if instead the constraints  $\tilde h_i(x, u) =  \norm{ u - \pi_{\rm f}(x)} - 2 k ( b_i(x) - a_i(x)^\top u)$ are used, the solution can be discontinuous. The square on the norm in~\eqref{eqn:h_i} appears to be important.

\section{Case Studies}
\label{sec:application}
In this section, we demonstrate our approach using our earlier examples. Recall \Cref{ex:dev_ex}. For $x \in \Xcal = [-2, 2]$, the analytic solution to the \ac{QP} is given by
\eqn{
\pi_{\rm qp}(x) = \begin{cases}
    \bmat{1 & 1}^\top & \text{if } x  \in (0, 1/3],\\
    \bmat{\frac{1 + x -2 x^2}{1 + x^2} & \frac{3x + x^2 }{1 + x^2}}^\top  & \text{else}
\end{cases}
}
which is  discontinuous at $x = 0$, as depicted in~\Cref{fig:example_1}.  Notice that this problem admits a Lipschitz solution 
\eqn{
\label{eq:feasible_solution_ex1}
\pi_{\rm f}(x) = \bmat{1 - x^2 &  1 + 2x}^\top.
}
Using this feasible solution, we can formulate a \ac{SOCP} using~\eqref{eqn:socp_r} that yields a Lipschitz continuous solution, depicted in \Cref{fig:example_1}~(d)).



Similarly, we can reformulate \Cref{ex:motivation_ex} into \acp{SOCP} using~\eqref{eqn:socp_r}. The analytic solution for \Cref{ex:motivation_ex} is 
\eqnN{
\pi_{\rm socp}(x) = \bmat{2 + \abs{x_2} -\min \left(1 + \abs{x_2}, \frac{1 - x_2 + \abs{x_2}}{\sqrt{1 + x_1^2}}\right) \\ 0}
}
which is Lipschitz on $\Xcal$. A similar but more complicated expression is available for~\Cref{ex:dev_ex} in the code repository.\footnote{\href{https://github.com/joonlee16/Lipschitz-controllers}{https://github.com/joonlee16/Lipschitz-controllers}}

Now, we consider Robinson's famous counterexample~\cite{robinson1982}:
\seqn[\label{eq:robin_raw}
]{
\argmin{u \in \R^4} \ & \frac {1}{2}\norm{ u}^2\\
\st \ & \bmat{0 & 1 & -1 & 0 \\
0 & -1 & -1 & 0 \\
1 & 0 & -1 & 0\\
-1 & 0 & -1 & -x_1} u \leq \bmat{-1 \\ -1 \\ -1 \\ -1 - x_2}
\label{eq:robinson_constraints}
}
for $x\in \Xcal=\{x\in \R^2 : \norm{x}_{\infty}\leq 2\}$. Although~\eqref{eq:robin_raw} is well-posed (satisfying \ac{SC}, strictly convex objective, etc), it was proven in~\cite{robinson1982} that the solution is not locally Lipschitz continuous at $x=\bmat{0 & 0}^\top$. It has a feasible solution $\pi_{\rm f}(x)=\bmat{0 & 0 & 2+\abs{x_2} & 0}^\top$, that is Lipschitz continuous and bounded on $\Xcal$. We can use the $\pi_{\rm f}$ above to reformulate~\eqref{eq:robin_raw} into an \ac{SOCP} form, and obtain a $\pi_{\rm socp}$ that is Lipschitz continuous by~\Cref{theorem:main}. Alternatively, we may construct the \ac{SOCP} using the analytic center~\eqref{eq:analytic_center_} which guarantees local Lipschitz continuity by~\Cref{cor:main_and_analytic}. Before transforming~\eqref{eq:robin_raw} into the \ac{SOCP} with the analytic center, note that its feasible space $K(x)=\{u \in \R^4 : A(x)u \leq b(x)\}$ is not compact, violating one of the assumptions in~\Cref{cor:main_and_analytic}. To ensure the compactness, we introduce bounding box constraints $ \norm{u}_\infty \leq u_{\rm max}$, with $u_{\max} = 5$ chosen large enough so that they are always inactive. We can now use the analytic center and solve~\eqref{eqn:socp_r}. The $\pi_{\rm socp}$ is locally Lipschitz continuous on $\Xcal$.

\section{CONCLUSIONS}

\label{sec:conc}

In this letter, we propose a new framework for optimization-based control synthesis that ensures a locally Lipschitz continuous control law. To this end, we reformulate a given \acf{QP}, a common form of optimization-based controllers, into a \acf{SOCP}. This reformulation guarantees Lipschitz continuity without requiring specific structural assumptions on the constraint matrices and admits an analytical solution form. As future work, we aim to develop an extended framework that further relaxes assumptions and verifies the conjecture. We also plan on using these results in multi-agent optimization-based controllers. 







\bibliographystyle{IEEEtran}
\bibliography{references_ll}

\appendix

\begin{lemma}
\label{lemma:analytic_center}
Let all assumptions in~\Cref{cor:main_and_analytic} hold. Then, the function $\pi_{\rm f}: \Xcal \to \R^m$ defined by $\pi_{\rm f}(x) = \operatorname{Ac}(K(x))$ is locally Lipschitz on $\Xcal$, and $\pi_{\rm f}(x) \in K(x)$. 
\end{lemma}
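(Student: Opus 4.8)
The plan is to first show that $\operatorname{Ac}(K(x))$ is well-defined — which makes the feasibility claim immediate — and then to obtain local Lipschitz continuity by applying the implicit function theorem to the stationarity condition of the logarithmic barrier.

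First I would settle well-definedness and feasibility. Since \ac{SC} holds, $\operatorname{int} K(x) \neq \emptyset$, so the barrier $\phi(x,\cdot) = -\sum_{i\in\Jcal}\log(b_i(x) - a_i(x)^\top u)$ is finite somewhere; because $K(x)$ is compact, $\operatorname{int} K(x)$ is a bounded open set and $\phi(x,u)\to +\infty$ as $u\to\partial K(x)$, so the minimum in~\eqref{eq:analytic_center_} is attained at an interior point $\pi_{\rm f}(x) := \operatorname{Ac}(K(x)) \in \operatorname{int} K(x) \subset K(x)$, giving the feasibility claim. For uniqueness I would observe that compactness of $K(x)$ forces $\{a_i(x)\}_{i\in\Jcal}$ to span $\R^m$ (otherwise $K(x)$ would contain a line), so the Hessian $\nabla_u^2\phi(x,u) = \sum_{i\in\Jcal}\frac{a_i(x)a_i(x)^\top}{(b_i(x)-a_i(x)^\top u)^2}$ is positive definite on $\operatorname{int} K(x)$ and $\phi(x,\cdot)$ is strictly convex there; hence $\pi_{\rm f}(x)$ is the unique solution of $F(x,u) := \nabla_u\phi(x,u) = \sum_{i\in\Jcal}\frac{a_i(x)}{b_i(x)-a_i(x)^\top u} = 0$ with $u\in\operatorname{int} K(x)$.

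Next I would prove local Lipschitz continuity. Fix $x_0\in\Xcal$ and put $u_0 = \pi_{\rm f}(x_0)$. Because $a_i,b_i\in C^2$, the map $F$ is $C^1$ on the open set $\Ocal = \{(x,u) : b_i(x)-a_i(x)^\top u > 0 \ \forall i\in\Jcal\}$, which contains $(x_0,u_0)$, and $\nabla_u F(x_0,u_0) = \nabla_u^2\phi(x_0,u_0)\succ 0$ is invertible. The implicit function theorem then produces a neighborhood $U$ of $x_0$ and a $C^1$ map $\tilde u : U\to\R^m$ with $(x,\tilde u(x))\in\Ocal$, $F(x,\tilde u(x)) = 0$, and $\tilde u(x_0) = u_0$. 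Since $\tilde u(x)\in\operatorname{int} K(x)$ is a stationary point of the strictly convex function $\phi(x,\cdot)$, it is its unique interior critical point, so $\tilde u(x) = \pi_{\rm f}(x)$ on $U$; hence $\pi_{\rm f}$ coincides with a $C^1$ map near $x_0$ and is in particular locally Lipschitz at $x_0$. As $x_0$ was arbitrary, $\pi_{\rm f}$ is locally Lipschitz on $\Xcal$, which together with the first step proves the lemma.

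The step I expect to be the main obstacle is the clean identification of the branch $\tilde u$ furnished by the implicit function theorem with the global analytic center $\pi_{\rm f}$ (ruling out spurious roots of $F$), together with the argument linking compactness of $K(x)$ to nonsingularity of the barrier Hessian; both are handled by restricting to the open domain $\Ocal$ on which $F$ is defined and invoking strict convexity. A minor wrinkle is that $\Xcal$ need not be open, in which case one reads the hypothesis as $a_i,b_i$ admitting $C^2$ extensions to a neighborhood of $\Xcal$ (or uses the boundary form of the implicit function theorem); this does not affect the conclusion. Alternatively, the regularity argument could simply cite standard interior-point results on smoothness of the analytic center in the problem data.
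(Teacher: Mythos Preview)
Your proposal is correct and follows essentially the same route as the paper: apply the implicit function theorem to the stationarity condition $\nabla_u\phi(x,u)=0$ of the log-barrier, using positive definiteness of the Hessian to obtain a $C^1$ (hence locally Lipschitz) selection. Your version is in fact more careful than the paper's in justifying existence and uniqueness of the analytic center and in identifying the IFT branch with $\pi_{\rm f}$, but the core argument is identical.
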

\begin{proof}
    Let $G(u, x) = -\sum_{i=1}^{p} \log(b_i(x)-a_i(x)u)$.
    By \ac{SC}, $K(x)$ has a non-empty interior at each $x \in \Xcal$. Hence, for $\bar x \in \Xcal$, $\bar u = \pi_{\rm f}(\bar x)$ is well-defined and $\bar u \in K(\bar x)$. Since $\log(\cdot)$ is smooth, 
    \eqnN{
    \nabla_{u} G(\bar u, \bar x) &= \sum_{i=1}^p \frac {a_i(\bar x)}{b_i(\bar x)-a_i(\bar x)^\top \bar u}=0,\\
    \nabla_{uu} G(\bar u, \bar x) &= \sum_{i=1}^p \frac{a_i(\bar x)^\top a_i(\bar x)}{(b_i(\bar x)-a_i(\bar x)^\top \bar u)^2} > 0.
    }
    Both expressions are well defined since by \ac{SC}, $\bar u$ satisfies $a_i(\bar x)^\top \bar u < b_i(\bar x)$. Similar expressions can be obtained for $\nabla_{x}G$, $\nabla_{xx}G$, $\nabla_{ux}G$. Therefore $G \in C^2$, and by the implicit function theorem~\cite[Theorem 4.2]{still2018lectures}, for any $\bar x \in \Xcal$ there exists an open neighborhood on which $\pi_{\rm f}$ is continuously differentiable. Ergo $\pi_{\rm f}$ is locally Lipschitz on $\Xcal$. 
\end{proof}

\begin{lemma}
\label{lemma:projection_1}
    Consider the optimization problem 
    \eqnN{
    v(x) = \argmin{v \in \R^m} \norm{ v - v_d(x)}^2 \ \st \  \norm{v} \leq r(x)
    }
    where $v_d: \Xcal \to \R^m$, $r: \Xcal \to \Rnonneg$ are Lipschitz continuous with constants $L_{v_d}, L_r \geq 0$ respectively. Then $v: \Xcal \to \R^m$ is also Lipschitz continuous with constant $L_{v_d} + L_r$. 
\end{lemma}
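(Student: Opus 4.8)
The plan is to exploit the explicit form of the Euclidean projection onto a ball, together with the standard fact that projection onto a fixed closed convex set is nonexpansive (i.e.\ $1$-Lipschitz). Write $P_r(w)$ for the projection of a point $w \in \R^m$ onto the closed ball $\{v \in \R^m : \norm{v} \le r\}$; this is well-defined because $r(x) \ge 0$ makes the feasible set a nonempty closed convex set, and by construction $v(x) = P_{r(x)}(v_d(x))$. The goal is then to control $\norm{v(x_1)-v(x_2)}$ by separately accounting for the change in the target $v_d$ and the change in the radius $r$.

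First I would record the closed form $P_r(w) = w$ when $\norm{w}\le r$ and $P_r(w) = r\, w/\norm{w}$ otherwise (with $P_r(0)=0$), so that in all cases $P_r(w)$ is a nonnegative scalar multiple of $w$ and $\norm{P_r(w)} = \min(r,\norm{w})$. Consequently, for a fixed $w$ and any $r_1, r_2 \ge 0$, the points $P_{r_1}(w)$ and $P_{r_2}(w)$ are collinear (both pointing along $w$), hence
\[
\norm{P_{r_1}(w) - P_{r_2}(w)} = \abs{\min(r_1,\norm{w}) - \min(r_2,\norm{w})} \le \abs{r_1 - r_2},
\]
using that $t \mapsto \min(t,c)$ is $1$-Lipschitz; the case $w=0$ is trivial. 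This is the radius-perturbation estimate.

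Then, for any $x_1, x_2 \in \Xcal$, I would insert the intermediate point $P_{r(x_1)}(v_d(x_2))$ and apply the triangle inequality:
\[
\norm{v(x_1) - v(x_2)} \le \norm{P_{r(x_1)}(v_d(x_1)) - P_{r(x_1)}(v_d(x_2))} + \norm{P_{r(x_1)}(v_d(x_2)) - P_{r(x_2)}(v_d(x_2))}.
\]
The first term is at most $\norm{v_d(x_1) - v_d(x_2)} \le L_{v_d}\norm{x_1 - x_2}$ by nonexpansiveness of the projection onto the fixed ball of radius $r(x_1)$; the second term is at most $\abs{r(x_1)-r(x_2)} \le L_r\norm{x_1 - x_2}$ by the radius-perturbation estimate and Lipschitz continuity of $r$. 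Adding these yields $\norm{v(x_1) - v(x_2)} \le (L_{v_d} + L_r)\norm{x_1 - x_2}$, which is the claim.

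The only genuinely quantitative ingredient is the radius-perturbation bound $\norm{P_{r_1}(w) - P_{r_2}(w)} \le \abs{r_1-r_2}$, so that is where I expect the (modest) work to lie; it becomes immediate once one observes that the projections of a common point onto concentric balls are collinear and have norms $\min(r_i,\norm{w})$. Everything else is the standard nonexpansiveness of the metric projection plus the triangle inequality.
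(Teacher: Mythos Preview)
Your proof is correct and uses essentially the same decomposition as the paper: insert the intermediate point $P_{r(x_1)}(v_d(x_2))$, bound one term by nonexpansiveness of the projection onto a fixed ball, and bound the other by the radius-perturbation estimate $\norm{P_{r_1}(w)-P_{r_2}(w)}\le\abs{r_1-r_2}$. The paper arrives at the same bound through a case analysis on whether $r(x)$, $r(y)$ vanish and whether $v_d$ lies inside or outside the ball, whereas your unified argument handles all cases at once by first establishing the radius-perturbation bound in full generality.
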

\begin{proof}
    For convenience, define $n(x) = \frac{v_d(x)}{\norm{v_d(x)}}$, with the convention that if $\norm{v_d(x)} = 0, n(x) = 0$.  
    Notice $v$ can be expressed as
    \eqn{
        v(x) &= \operatorname{Proj}_{r(x)}(v_d(x))
        =\min( r(x), \norm{v_d(x)}) n(x) \label{eqn:vx}
    }
    where $\operatorname{Proj}_r(u)$ denotes the projection of $u \in \R^n$ onto the closed ball of radius $r \geq 0$ centered at the origin. 
    
    We aim to prove that there exists a $L\geq 0$ such that
    \eqnN{
    \norm{v(x) - v(y)} \leq L \norm{x - y}
    }
    for all $x, y \in \Xcal$. Consider three cases:
    
    \textbf{Case 1} [$r(x) = r(y) = 0$]: 
    
    Since $v(x) = v(y) = 0$, the claim holds with $L = 0$. 

    \textbf{Case 2} [$r(x) = 0,  r(y) > 0$]:
    
    Since $v(x) = 0$, and $v(y)$ is as in~\eqref{eqn:vx},
    \eqnN{
    \norm{v(x) - v(y)} &= \norm{\min( r(y), \norm{v_d(y)}) \  n(y)}\\
    &= \min( r(y), \norm{v_d(y)}) \norm{ n(y)}\\
    &\leq \min( r(y), \norm{v_d(y)})\\
    &\leq r(y) = \abs{ r(x) - r(y)}
    }
    Hence the claim is true with $L = L_r$. 
    
    \textbf{Case 3} [$r(x),  r(y) > 0$]:
    Consider two subcases. 
    
    \textbf{Case 3a} [$\norm{v_d(x)} \leq r(x), \norm{v_d(y)} \leq r(y)$]:
    
    Here, $v(x) = v_d(x), v(y) = v_d(y)$, and hence
    \eqnN{
    \norm{v(x) - v(y)} &= \norm{ v_d(x) - v_d(y)} \leq L_{v_d} \norm{ x - y}
    }
    thus, the claim holds with $L = L_{v_d}$. 

    \textbf{Case 3b} [$\norm{v_d(y)} > r(y)$]:
    Let $p = \operatorname{Proj}_{r(x)}(v_d(y)) = r(x) n(y)$. Then,
    \eqnN{
    &\norm{v(x) - v(y)} \leq \norm{ v(x) - p} + \norm{ p - v(y)}\\
    &= \norm{ \operatorname{Proj}_{r(x)} v_d(x) - \operatorname{Proj}_{r(x)} v_d(y)} + \norm{r(x)n(y) - r(y)n(y)}.
    }
    The first term on the right is bounded by $\norm{v_d(x) - v_d(y)}$ since the projection onto a closed convex set has Lipschitz constant 1. The second term is bounded by $\abs{ r(x) - r(y)} \norm{ n(y)}$. Since $\norm{n(y)} \leq 1$, 
    \eqnN{
    \norm{v(x) - v(y)} &\leq \norm{v_d(x) - v_d(y)} + \abs{ r(x) - r(y)} \norm{ n(y)}\\
     &\leq L_{v_d}\norm{ x - y} + L_r \norm{x-y}.
    }
    Hence the claim is true with $L = L_{v_d} + L_r$. 
\end{proof}

\end{document}